\documentclass[12pt]{amsart}
\usepackage{amsmath,amscd,amsthm,verbatim,alltt,amsfonts,array}
\usepackage[english]{babel}
\usepackage{latexsym}
\usepackage{amssymb}
\usepackage{euscript}
\usepackage{nicefrac}
\usepackage{graphicx}
\usepackage[all]{xy}

\newtheorem{Theorem}{Theorem}
\theoremstyle{plain}

\newtheorem{Corollary}{Corollary}

\newtheorem{Question}{Question}

\newtheorem{Example}{Example}

\newtheorem{Lemma}{Lemma}

\newtheorem{Proposition}{Proposition}
\newtheorem{Remark}{Remark}

\numberwithin{equation}{section}
%
%
\let\epsilon\varepsilon
\let\phi=\varphi
\let\kappa=\varkappa
\def\opn#1#2{\def#1{\operatorname{#2}}} 
\opn\sdepth{sdepth}
\opn\depth{depth}
\opn\chara{char} \opn\length{\ell} \opn\pd{pd} \opn\rk{rk}
\opn\projdim{proj\,dim} \opn\injdim{inj\,dim} \opn\rank{rank}
\opn\depth{depth} \opn\sdepth{sdepth} \opn\fdepth{fdepth}
\opn\grade{grade} \opn\height{height} \opn\embdim{emb\,dim}
\opn\codim{codim}  \opn\min{min} \opn\max{max}

\opn\Tr{Tr} \opn\bigrank{big\,rank}
\opn\superheight{superheight}\opn\lcm{lcm}
\opn\trdeg{tr\,deg}
\opn\reg{reg} \opn\lreg{lreg} \opn\ini{in} \opn\lpd{lpd}
\opn\size{size}

%
\opn\Spec{Spec} \opn\Supp{Supp} \opn\supp{supp} \opn\Sing{Sing}
\opn\Ass{Ass} \opn\Min{Min}
%
%
\opn\Ann{Ann} \opn\Rad{Rad} \opn\Soc{Soc}
%
%
\opn\Im{Im} \opn\Ker{Ker} \opn\Coker{Coker} \opn\Am{Am}
\opn\Hom{Hom} \opn\Tor{Tor} \opn\Ext{Ext} \opn\End{End}
\opn\Aut{Aut} \opn\id{id}  \opn\deg{deg}

\opn\nat{nat}
\opn\pff{pf}
\opn\Pf{Pf} \opn\GL{GL} \opn\SL{SL} \opn\mod{mod} \opn\ord{ord}
\opn\Gin{Gin} \opn\Hilb{Hilb}
%
%
\textwidth=15cm \textheight=22cm \topmargin=0.5cm
\oddsidemargin=0.5cm \evensidemargin=0.5cm \pagestyle{plain}
%
%
\def\qed{\ifhmode\textqed\fi
      \ifmmode\ifinner\quad\qedsymbol\else\dispqed\fi\fi}
\def\textqed{\unskip\nobreak\penalty50
       \hskip2em\hbox{}\nobreak\hfil\qedsymbol
       \parfillskip=0pt \finalhyphendemerits=0}
\def\dispqed{\rlap{\qquad\qedsymbol}}

%

\begin{document}
\title{\bf Depth  in a pathological  case }

\author{ Dorin Popescu}

\thanks{The  support from  grant ID-PCE-2011-1023 of Romanian Ministry of Education, Research and Innovation is gratefully acknowledged.}

\address{Dorin Popescu,  Simion Stoilow Institute of Mathematics of Romanian Academy, Research unit 5,
 P.O.Box 1-764, Bucharest 014700, Romania}
\email{dorin.m.popescu@gmail.com}
\maketitle
\begin{abstract} Let $I$ be  a  squarefree monomial ideal of a polynomial algebra over a field minimally generated by $f_1,\ldots,f_r$ of  degree $ d\geq 1$, and a set $E$ of monomials  of degree $\geq d+1$. Let $J\subsetneq I$ be a squarefree monomial ideal generated in degree $\geq d+1$.  Suppose that  all squarefree monomials of $I\setminus (J\cup E)$ of degree $d+1$ are some least common multiples of $f_i$. If $J$ contains all least common multiples of two of $(f_i)$ of degree $d+2$ then $\depth_SI/J\leq d+1$ and Stanley's Conjecture holds for $I/J$.

 \noindent
  {\it Key words } : Monomial Ideals,  Depth, Stanley depth.\\
 {\it 2010 Mathematics Subject Classification: Primary 13C15, Secondary 13F20, 13F55,
13P10.}
\end{abstract}

\section*{Introduction}
Let $K$ be a field and $S=K[x_1,\ldots,x_n]$ be the polynomial $K$-algebra in $n$ variables. Let  $I\supsetneq J$ be  two    monomial ideals of $S$ and suppose that  $I$ is generated by some monomials of degrees $\geq d$   for some positive integer $d$.  After  a multigraded isomorphism we may assume either that $J=0$, or $J$ is generated in degrees $\geq d+1$.

 Suppose that $I \subset S$ is minimally generated by some  monomials $f_1,\ldots,f_r$ of degrees $d$,  and a  set $E$  of  monomials of degree $\geq d+1$.
 Let  $B$ (resp. $C$) be the set of   squarefree monomials of degrees $d+1$  (resp. $d+2$) of $I\setminus J$.   Let $w_{ij}$ be the least common multiple of $f_i$ and $f_j$, $i<j$ and set $W$ to be the set of all $w_{ij}$.
By \cite[Proposition 3.1]{HVZ} (see  \cite[Lemma 1.1]{P}) we have $\depth_S I/J\geq d$. It is easy to see that if $d=1$, $E=\emptyset$ and $B\subset W$ then $\depth_SI/J=d$ (see for instance \cite[Lemma 1.8]{P} and \cite[Lemma 3]{AP}). Attempts to extend this result were made in \cite[Proposition 1.3]{PZ}, \cite[Lemma 4]{AP}. However \cite[Example 1]{AP} (see here Example \ref{e2}) shows that for $d=2$, $E=\emptyset$ and $B\subset W$ it holds $\depth_SI/J=d+1=3$.

 If $B\cap (f_1,\ldots,f_r)\subset W$ we call $I/J$ a {\em pathological case}.
 It is the purpose of this paper  to show the following theorem.

\begin{Theorem} \label{t}   If  $B\cap (f_1,\ldots,f_r)\subset W$  and $C\cap W=\emptyset$ then $\depth_S I/J \leq d+1$.
\end{Theorem}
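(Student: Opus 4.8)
The plan is to translate the depth bound into the non-vanishing of a Koszul homology group in a specific homological degree, and then to read that non-vanishing off the combinatorics of the squarefree monomials that survive in $I\setminus J$. Since $S$ is regular, Auslander--Buchsbaum gives $\depth_S I/J = n - \pd_S I/J$, and $\pd_S I/J = \max\{\,i : H_i(x_1,\dots,x_n;I/J)\neq 0\,\}$, where $H_\bullet(x_1,\dots,x_n;-)$ denotes Koszul homology. The quoted inequality $\depth_S I/J\geq d$ reads $\pd_S I/J\leq n-d$, so it suffices to prove that $H_{n-d}(x_1,\dots,x_n;I/J)\neq 0$ or $H_{n-d-1}(x_1,\dots,x_n;I/J)\neq 0$; either one forces $\pd_S I/J\geq n-d-1$, hence $\depth_S I/J\leq d+1$.

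Because $I$ and $J$ are squarefree, $M:=I/J$ is a squarefree module: $M_\sigma=K$ exactly when $x_\sigma:=\prod_{k\in\sigma}x_k\in I\setminus J$ and $M_\sigma=0$ otherwise, and each $\Tor_i^S(K,M)$ is supported in squarefree degrees. Moreover, since $M$ is generated in degrees $\geq d$, a minimal free resolution has its $i$-th syzygies in degrees $\geq d+i$, so $\beta_{i,\sigma}(M)\neq0$ forces $|\sigma|\geq d+i$. For $i=n-d$ this leaves only $\sigma=[n]$, and for $i=n-d-1$ only $\sigma=[n]$ or $\sigma=[n]\setminus\{a\}$. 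I would therefore fix such a $\sigma$ and identify $\beta_{i,\sigma}(M)$ with the homology, in position $i$, of the complex whose $p$-th term is $\bigoplus_{T\subseteq\sigma,\,|T|=p}M_{\sigma\setminus T}$ and whose differential is the alternating sum of the multiplication maps $M_\tau\to M_{\tau\cup\{k\}}$; each such map is the identity when $x_{\tau\cup\{k\}}\in I\setminus J$ and zero when $x_{\tau\cup\{k\}}\in J$. In the top degree $\sigma=[n]$ this complex is $0\to C^{n-d}\to C^{n-d-1}\to C^{n-d-2}\to\cdots$ with $C^{n-d}=\bigoplus_{i}Kf_i$, $C^{n-d-1}=\bigoplus_{w\in B}Kw$, and $C^{n-d-2}=\bigoplus_{u\in C}Ku$, since in these degrees the surviving monomials are exactly the generators, the elements of $B$, and the elements of $C$.

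This is where the two hypotheses act, and the heart of the argument is to exhibit a nonzero class in position $n-d$ or $n-d-1$. The differential out of $C^{n-d}$ sends $f_i$ to $\sum\pm w$ over the degree-$(d+1)$ multiples $w=x_kf_i\notin J$, so a collision $w=x_kf_i=x_\ell f_j$ is precisely a pairwise least common multiple $w_{ij}\in B$. The hypothesis $B\cap(f_1,\dots,f_r)\subset W$ says that every $w\in B$ which is a multiple of some generator is in fact a multiple of at least two of them, i.e. no basis element of $C^{n-d-1}$ receives a lone incidence from $C^{n-d}$; I would use this rigidity both to analyze $\ker(C^{n-d}\to C^{n-d-1})$, whose non-vanishing already gives $\depth_S I/J=d$, and to control its image when that kernel is zero. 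The hypothesis $C\cap W=\emptyset$ says every degree-$(d+2)$ least common multiple $w_{ij}$ lies in $J$, so the corresponding basis elements are \emph{absent} from $C^{n-d-2}$; these are exactly the cells that could serve as boundaries of a candidate cycle in position $n-d-1$, and their systematic absence is the mechanism that prevents such a class from dying, forcing $H_{n-d-1}\neq0$ in the pathological case. Together with $\depth_S I/J\geq d$ this pins the depth to $\{d,d+1\}$, in agreement with Example \ref{e2}.

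The main obstacle is the homology computation itself. The family $\{\tau:x_\tau\in I\setminus J\}$ is only an interval of the Boolean lattice (the intersection of the up-set $\{x_\tau\in I\}$ with the down-set $\{x_\tau\notin J\}$), not a simplicial complex, so I cannot invoke collapsibility directly; the argument must track the generators $f_i$, the surviving pairwise multiples $w_{ij}\in B$, the contribution of $E$ inside $B$, and the deleted degree-$(d+2)$ multiples simultaneously, and then show that a genuine cycle survives in position $n-d-1$ for $\sigma=[n]$ or for one of the near-maximal degrees $\sigma=[n]\setminus\{a\}$. I expect the cleanest route is to restrict to a minimal such $\sigma$ over which $M$ is resolved by a Taylor-type subcomplex, reducing the question to a finite explicit reduced-homology calculation; verifying non-vanishing there uniformly over the possible configurations of the $f_i$ — including the degenerate situation in which no two generators differ in two variables, where $C\cap W=\emptyset$ is vacuous and a separate direct check is needed — is the delicate point.
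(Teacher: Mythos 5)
Your reduction of the statement to the non-vanishing of $H_i(x;I/J)$ for some $i\geq n-d-1$, and the observation that squarefreeness confines the relevant Betti numbers to $\sigma=[n]$ or $\sigma=[n]\setminus\{a\}$, is correct and is the same homological framework the paper uses in Lemma \ref{d'}. But the proposal stops exactly where the proof has to start: you never construct the nonzero class. The sentence asserting that the absence of the cells $w_{ij}\in C\cap W$ ``is the mechanism that prevents such a class from dying, forcing $H_{n-d-1}\neq 0$'' is the theorem restated in Koszul language, not an argument for it, and your closing paragraph concedes that the uniform verification over all configurations of the $f_i$ is still to be done. That verification is the entire content of the result. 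It cannot be a soft consequence of the two hypotheses acting degree by degree: Example \ref{e} of the paper exhibits a configuration with $B\subset W$ where $\depth_S I/J=d$, while Example \ref{e2} gives one with $\depth_S I/J=d+1$, so whether the obstruction lives in homological degree $n-d$ or $n-d-1$ depends on the global incidence structure of the $f_i$, which your complex $C^{n-d}\to C^{n-d-1}\to C^{n-d-2}$ records but does not by itself decide.

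For comparison, the paper does not attempt a direct top-degree homology computation. It runs an induction on $r$: the case $r<5$ is settled by the counting bound $|B\cap(f_1,\ldots,f_r)|<2r$ together with Shen's criterion; the case $E\neq\emptyset$ is split off by an exact sequence (Proposition \ref{p}); and the core case (Lemma \ref{m}) is handled in Section 2 by partitioning the generators into the sets $U_i$ of all $f_j$ divisible by a fixed degree-$(d-1)$ monomial $u_i$, and then deleting these groups one at a time, controlling the depth of each quotient $(f_r)/(f_r)\cap(J,I')$ explicitly (it is $d+\depth K[\Gamma]/Q$ for an ideal $Q$ of squarefree quadrics, where $C\cap W=\emptyset$ is what forces the quadrics into $J$) and propagating the bound with the Depth Lemma (Lemmas \ref{l2}, \ref{l3}, Proposition \ref{pr}). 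The explicit Koszul cycle appears only once, in Lemma \ref{d'}, and only to convert the statement $\depth_S I/(J,f_r)=d$ into $\depth_S I/J\leq d+1$ by truncating a top-degree cycle of $I/(J,f_r)$ against a variable $x_j\in\supp f_r$. If you want to salvage your approach, you would at minimum need an analogue of that truncation step together with a replacement for the $U_i$-decomposition that tells you which summand of $C^{n-d-1}$ carries the surviving class; as written, the proposal is a correct setup with the proof missing.
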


In particular, if $C\cap W=\emptyset$ then the so called Stanley's Conjecture  holds in the pathological case (see Corollary \ref{c}).
But why is important this pathological case? The methods used in \cite{PZ1}, \cite{AP}, \cite{P1} to show a weak form of Stanley's Conjecture when $r\leq 4$ (see \cite[Conjecture 0.1]{P1}) could be applied only   when $B\cap (f_1,\ldots,f_r)\not\subset W$, that is when $I/J$ is not  pathological. Thus the above theorem solves partially one of the  obstructions to prove this weak form. We believe that the condition $C\cap W=\emptyset$ could be removed from the above theorem. The proof of Theorem \ref{t} relies on Lemmas \ref{d'}, \ref{m} and Examples \ref{e4}, \ref{e}, \ref{e3} found after many computations with the Computer Algebra System SINGULAR \cite{DGPS}.

The above theorem hints a possible positive answer to the following question.

\begin{Question} Let $i\in [r-1]$. Suppose that $E=\emptyset$ and every squarefree monomial from $I\setminus J$ of degree $d+i$ is a least common multiple of $i+1$ monomials $f_j$. Then is it $\depth_SI/J\leq d+i$?
\end{Question}

We owe thanks to A. Zarojanu and a Referee, who noticed  some mistakes and a  gap in   Section 2 of  some previous versions of this paper.

\section{Depth and Stanley depth}

Suppose that $I$ is minimally generated by some squarefree monomials $f_1,\ldots,f_r$ of degree $ d$  for some $d\in {\mathbb N}$ and a set $E$ of some squarefree monomials of degree $\geq d+1$.  Let $C_3$ be the set of all $c\in C\cap (f_1,\ldots,f_r)$ having all degree $(d+1)$ divisors from $B\setminus E$ in $W$. In particular each monomial of $C_3 $ is the least common multiple of at least three of the $f_i$.

Next  lemma is closed to \cite[Lemma 1.1]{PZ}.
\begin{Lemma} \label{d} Suppose that $E=\emptyset$ and $\depth_SI/(J,b)=d$ for some $b\in B$. Then $\depth_SI/J\leq d+1$.
\end{Lemma}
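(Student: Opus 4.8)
The plan is to exploit the short exact sequence
\[
0 \longrightarrow (J,b)/J \longrightarrow I/J \longrightarrow I/(J,b) \longrightarrow 0,
\]
whose outer terms I can control, together with the Depth Lemma. Multiplication by $b$ gives a (degree-shifted) isomorphism $S/(J:b)\cong (J,b)/J$, so $\depth_S (J,b)/J=\depth_S S/(J:b)$, and the first task is to understand the colon ideal $(J:b)$. Since $J$ is squarefree and generated in degrees $\geq d+1$ while $b$ is squarefree of degree $d+1$ with $b\notin J$, no minimal generator $g$ of $J$ can divide $b$: such a $g$ would be squarefree of degree $\leq d+1$ dividing $b$, hence equal to $b\in J$, a contradiction. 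Therefore every minimal generator $g/\gcd(g,b)$ of $(J:b)$ is a nonconstant squarefree monomial supported on the variables that do \emph{not} divide $b$. Writing $y_1,\dots,y_{d+1}$ for the variables dividing $b$, $S'=K[x_k : x_k\nmid b]$ and $Q=(x_k : x_k\nmid b)$, this shows $(J:b)=LS$ for a squarefree monomial ideal $L\subseteq S'$, so that $S/(J:b)\cong (S'/L)[y_1,\dots,y_{d+1}]$ and
\[
\depth_S (J,b)/J=\depth_{S'}(S'/L)+(d+1)\geq d+1 .
\]

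With this structural fact I would split into two cases according to whether the last inequality is strict. If $\depth_S(J,b)/J\geq d+2$, the Depth Lemma applied to the sequence gives
\[
d=\depth_S I/(J,b)\ \geq\ \min\{\depth_S (J,b)/J-1,\ \depth_S I/J\};
\]
since the first entry is $\geq d+1$, the minimum is forced to be $\depth_S I/J\leq d\leq d+1$, and in fact $\depth_S I/J=d$ here.

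The remaining case $\depth_S(J,b)/J=d+1$, i.e.\ $\depth_{S'}(S'/L)=0$, is the genuinely harder one, and I would treat it through associated primes. Because $S'/L$ is a reduced (Stanley--Reisner) ring it has no embedded primes, so $\depth_{S'}(S'/L)=0$ forces the maximal ideal $\mathfrak m_{S'}$ to be a \emph{minimal} prime of $L$, i.e.\ $L=\mathfrak m_{S'}$ and $(J:b)=Q$. As $Q=(J:b)=\Ann_S(\bar b)$ is a prime monomial ideal and $\bar b\neq 0$ in $I/J$, we obtain $Q\in\Ass_S(I/J)$. Now $Q$ is generated by the $n-(d+1)$ variables not dividing $b$, so $\dim S/Q=d+1$, and the standard bound $\depth_S M\leq\dim S/\mathfrak p$ for $\mathfrak p\in\Ass_S M$ yields $\depth_S I/J\leq d+1$. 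Combining the two cases proves the lemma.

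I expect the main obstacle to be precisely this boundary case $\depth_S(J,b)/J=d+1$: there the Depth Lemma alone is inconclusive, since the connecting map $H^d_{\mathfrak m}(I/(J,b))\to H^{d+1}_{\mathfrak m}((J,b)/J)$ could a priori be an isomorphism, leaving $\depth_S I/J$ unbounded from above. One really must extract from $I/J$ the associated prime $Q$ of coheight $d+1$. Verifying the squarefree-support description of $(J:b)$ and the reducedness argument that forces $(J:b)=Q$ in this case is exactly where the squarefree hypotheses on $J$ and $b$ (and the assumption $E=\emptyset$, so that $b$ is squarefree of degree $d+1$) are indispensable.
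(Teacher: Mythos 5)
Your proof is correct and follows essentially the same route as the paper: the same short exact sequence $0\to (J,b)/J\to I/J\to I/(J,b)\to 0$ together with the Depth Lemma, with the case split governed by whether $\depth_S(J,b)/J\geq d+2$. The only difference is that your boundary case $(J:b)=Q$ --- which is precisely the situation where $b$ divides no element of $C$ --- is dispatched in the paper by citing \cite[Lemma 1.5]{PZ}, whereas you give the direct associated-prime argument, and your explicit computation of $(J:b)$ substantiates the paper's terse claim that the first term has depth $\geq d+2$ whenever some $c\in C$ is a multiple of $b$.
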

\begin{proof} If there exists no $c\in C$ such that $b|c$ then we have $\depth_SI/J\leq d+1$ by \cite[Lemma 1.5]{PZ}.
Otherwise, in the exact sequence
$$0\to (b)/J\cap (b)\to I/J\to I/(J,b)\to 0$$
the first term has depth $\geq d+2$ because for a multiple $c\in C$ of $b$ all the variables of $c$ form a regular system. By hypothesis  the last term has depth $\geq d$ and so the middle one has depth $d$ too using  the Depth Lemma.
\hfill\ \end{proof}

We recall the following example from \cite{AP}.
\begin{Example} \label{e2}  {\em Let $n=5$, $r=5$, $d=2$, $f_1=x_1x_2$, $f_2=x_1x_3$, $f_3=x_1x_4$,  $f_4=x_2x_3$, $f_5=x_3x_5$ and $I=(f_1,\ldots,f_5)$,
$J=(x_1x_2x_5,x_1x_4x_5, x_2x_3x_4,x_3x_4x_5).$
It follows that  $B=\{x_1x_2x_3,x_1x_2x_4,x_1x_3x_4,x_1x_3x_5,x_2x_3x_5\}$ and so $s=|B|=r=5$. Note that $B\subset W$. A computation with SINGULAR when char $K=0$ gives $\depth_SI/J=\depth_SS/J=3$ and $\depth_SS/I=2$. Since depth depends on the characteristic of the field it follows in general only that $\depth_SS/J\leq 3$, $\depth_SS/I\leq 2$ using \cite[Lemma 2.4]{AE}. In fact $\depth_SI/J\leq d+1=3$ using \cite[Proposition 2.4]{Sh} because $q=|C|=2<r=5$. Note that choosing any $b\in B$ we  have $\depth_SI/(J,b)=2$ because the corresponding $s'<r $ and we may apply \cite[Theorem 2.2]{P}. But then $\depth_SI/J\leq 3$ by Lemma \ref{d}.}
\end{Example}
\begin{Example} \label{e4}  {\em In the above example set $I'=(f_1,\ldots,f_4)$, $J'=J\cap I'=$\\
$(x_1x_2x_5,x_1x_4x_5,x_2x_3x_4)$. Note that we have an injection $I'/J'\to I/J$ and so $\depth_SI'/J'>2$ because otherwise we get $\depth_SI/J=2$ which is impossible. Given $B',W'$ for $I'/J'$ we see that $B'\not\subset W'$  since $x_2x_3x_5\in B'\setminus W'$, that is $I'/J'$ is not anymore in the pathological case even this was the case of $I/J$.   We have $I/(J,f_5)\cong I'/(J',x_1x_3x_5,x_2x_3x_5)$. Using SINGULAR when char $K=0$ we see that $\depth_SI'/(J',x_1x_3x_5)=\depth_SI'/(J',x_1x_3x_5,x_2x_3x_5)=2$, $\depth_SI'/(J',x_2x_3x_5)=3$. It follows that always $\depth_SI'/(J',x_1x_3x_5),\depth_SI'/(J',x_1x_3x_5,x_2x_3x_5)\leq 2$ using \cite[Lemma 2.4]{AE}. These inequalities are in fact equalities because $I'$ is generated in degree $2$. Thus we cannot apply Lemma \ref{d} for $I',J'$, $b=x_2x_3x_5$ but we may apply this lemma for $I',J'$, $b'=x_1x_3x_5$ to get $\depth_SI'/J'\leq 3$.}
\end{Example}

\begin{Lemma} \label{d'} Suppose that $r>1$ and $\depth_SI/(J,f_r)=d$. Then $\depth_SI/J\leq d+1$.
\end{Lemma}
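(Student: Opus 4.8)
The plan is to analyze the short exact sequence of multigraded $S$-modules
$$0 \to (f_r)/(J\cap (f_r)) \to I/J \to I/(J,f_r)\to 0,$$
in which the first map is induced by inclusion and the second is the natural projection, and then to feed the hypothesis $\depth_S I/(J,f_r)=d$ on the right-hand term into the Depth Lemma.

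First I would identify the left-hand term $A:=(f_r)/(J\cap (f_r))$. Multiplication by $f_r$ gives an isomorphism $A\cong (S/(J:f_r))(-d)$, so everything reduces to understanding the colon ideal $(J:f_r)$. Put $\sigma=\supp(f_r)$, so $|\sigma|=d$. Since $J$ is squarefree and generated in degrees $\ge d+1$, a monomial $u$ lies in $(J:f_r)$ precisely when $\supp(m)\setminus\sigma\subseteq\supp(u)$ for some minimal generator $m$ of $J$; hence $(J:f_r)=\mathfrak a S$, where $\mathfrak a$ is the squarefree monomial ideal of $S'=K[x_j:j\notin\sigma]$ generated by the monomials $\prod_{i\in\supp(m)\setminus\sigma}x_i$. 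These generators are never $1$ (as $\deg m\ge d+1>d$), so $A\cong\big(K[x_i:i\in\sigma]\otimes_K S'/\mathfrak a\big)(-d)$, and therefore $\depth_S A=d+\depth_{S'}(S'/\mathfrak a)\ge d$.

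Now I would run the Depth Lemma, writing $t=\depth_{S'}(S'/\mathfrak a)\ge 0$, so $\depth_S A=d+t$ while $\depth_S I/(J,f_r)=d$. If $t=0$, then from $\depth_S A\ge \min(\depth_S I/J,\ d+1)$ we get $\depth_S I/J\le d$. If $t\ge 2$, then $\depth_S A\ge d+2$ and from $d=\depth_S I/(J,f_r)\ge\min(\depth_S A-1,\ \depth_S I/J)$ we again get $\depth_S I/J\le d$. In both cases we are done, in fact with $\depth_S I/J=d$. The only surviving case is $t=1$, i.e. $\depth_S A=d+1$, where the Depth Lemma yields no upper bound by itself; the bound $d+1$ is genuinely attained here (compare Examples \ref{e2} and \ref{e4}), so it cannot be sharpened to $d$.

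The main obstacle is thus the case $\depth_S A=d+1$. Here I would pass to the long exact sequence of local cohomology with respect to the graded maximal ideal $\mathfrak m$ of $S$. With $C=I/(J,f_r)$, the vanishing $H^i_{\mathfrak m}(A)=0$ for $i<d+1$ gives $H^d_{\mathfrak m}(I/J)=\ker\delta$ for the connecting map $\delta\colon H^d_{\mathfrak m}(C)\to H^{d+1}_{\mathfrak m}(A)$, while $\coker\delta$ embeds into $H^{d+1}_{\mathfrak m}(I/J)$; consequently $\depth_S I/J\ge d+2$ would force $\delta$ to be surjective, and it suffices to rule this out. The structure of $A$ gives leverage: by the K\"unneth formula $H^{d+1}_{\mathfrak m}(A)\cong\big(H^d_{\mathfrak m_\sigma}(K[x_i:i\in\sigma])\otimes_K H^1_{\mathfrak m_{S'}}(S'/\mathfrak a)\big)(-d)$, a module carried by the inverse-polynomial first factor, which one would compare (via graded local duality, $H^d_{\mathfrak m}(C)$ being the Matlis dual of $\Ext^{n-d}_S(C,\omega_S)$) to obstruct surjectivity of $\delta$ on graded pieces of sufficiently negative degree. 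When this degree comparison is not decisive I would instead induct on $r$, using $I'=(f_1,\dots,f_{r-1})+E$, the identification $C\cong I'/\big((J\cap I')+f_r(I':f_r)\big)$, and the injection $I'/(J\cap I')\hookrightarrow I/J$ exactly as in Example \ref{e4}; this is also where the hypothesis $r>1$ enters, ensuring $C\ne 0$ in degree $d$. Establishing the non-surjectivity of $\delta$ uniformly across all subcases is the delicate point of the argument.
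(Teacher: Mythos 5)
Your reduction via the exact sequence $0 \to (f_r)/(J\cap(f_r)) \to I/J \to I/(J,f_r) \to 0$ and the computation $\depth_S (f_r)/(J\cap(f_r)) = d+t$ with $t=\depth_{S'}(S'/\mathfrak a)$ are correct, and the Depth Lemma does dispose of the cases $t=0$ and $t\ge 2$ (indeed giving $\depth_S I/J\le d$ there). But the entire content of the lemma sits in the remaining case $t=1$, which is precisely the case occurring in Examples \ref{e2} and \ref{e4}, and there your argument stops. You correctly observe that it suffices to show the connecting map $\delta\colon H^d_{\mathfrak m}(I/(J,f_r))\to H^{d+1}_{\mathfrak m}\bigl((f_r)/(J\cap(f_r))\bigr)$ is not surjective, but you do not prove this. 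The proposed degree comparison via K\"unneth and local duality is not decisive: for squarefree modules both modules can be nonzero in overlapping squarefree (negative) multidegrees, so there is no a priori graded obstruction to surjectivity. The fallback induction on $r$ is not actually set up, and it is not evident how it would close the case. So the proof has a genuine gap at its crucial point, which you yourself flag as ``the delicate point.''

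The paper closes exactly this gap by a different, explicit argument with Koszul homology rather than the long exact sequence of local cohomology. Since $\depth_S I/(J,f_r)=d$, the module $H_{n-d}(x;I/(J,f_r))$ is nonzero, and because $I/(J,f_r)$ has a squarefree multigraded resolution a nonzero class is represented by a cycle $z=\sum_{i<r} y_if_ie_{[n]\setminus\supp f_i}$ with $y_i\in K$. Its boundary, computed in $K_{n-d-1}(x;I/J)$, is supported only on monomials of $B\cap(f_r)$, all of which are divisible by any fixed $x_j$ with $j\in\supp f_r$; contracting $z$ against such an $x_j$ (chosen with $j\notin\supp f_m$ for some $m$ with $y_m\ne 0$, which uses $r>1$) yields a nonzero cycle $z_j\in K_{n-d-1}(x;I/J)$ concentrated in the squarefree multidegree $[n]\setminus\{j\}$, where no boundaries exist (an element of $K_{n-d}(x;I/J)$ in that multidegree would require a coefficient of degree $d-1$ in $I$). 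Hence $H_{n-d-1}(x;I/J)\ne 0$ and $\depth_S I/J\le d+1$ by \cite[Theorem 1.6.17]{BH}. This exploits the squarefree monomial structure in an essential way that your general homological framing does not; to complete your approach you would need an argument of comparable specificity to rule out $\delta$ being an isomorphism.
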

\begin{proof}  Let $B\cap (f_r)=\{b_1,\ldots,b_p\}$.
 As  $I/(J,f_r)$ has a squarefree, multigraded free resolution we see that only the
 components of squarefree degrees of
$$\Tor_S^{n-d}(K, I/(J,f_r))\cong H_{n-d}(x; I/(J,f_r))$$
are nonzero, the last module being the Koszul homology of $I/(J,f_r)$.
 Thus we may find
$$z =
\sum^{r-1}_{i=1} y_i f_i e_{[n]\setminus \supp f_i} \in K_{n-d}(x; I/(J,f_r)),$$
 $y_i \in K$
inducing a nonzero element in $H_{n-d}(x; I/(J,f_r))$.  Here we set $\supp f_i=\{t\in [n]:x_t|f_i\}$ and $e_A = \wedge_{j\in A} e_j$ for a
subset $A\subset [n]$.
We have
$$\partial z=\sum_{b\in B} P_b(y)be_{[n]\setminus \supp b}=\sum_{i=1}^p P_{b_i}(y)b_ie_{[n]\setminus \supp b_i},$$
where $P_b$ are linear homogeneous polynomials in $y$. Note that  $P_b(y)=0$ for all $b\not\in\{b_1,\ldots,b_p\}$. Choose $j\in \supp f_r$ and consider
$$z_j= \sum^{r-1}_{i=1, f_i\not\in (x_j)} y_i f_i e_{[n]\setminus  (\{x_j\}\cup\ \supp f_i)} \in K_{n-d-1}(x; I/J).$$
In $\partial z_j$ appear only terms of type $ue_A$, $j\not\in A$ with $|A|=n-d-1$ and $u=\Pi_{i\in [n]\setminus (A\cup \{j\})}x_i$. Thus terms of type $b_ie_{[n]\setminus \supp b_i}$, $i\in [p]$ are not present in $\partial z_j$ because $b_i\in (x_j)$. It follows that $\partial z_j=0$ and so $z_j$ is a cycle. Note that a cycle of $K_{n-d-1}(x; I/J)$ could contain also terms of type $ve_{A'}$ with $|A'|=n-d-1$ and $v=\Pi_{i\in [n]\setminus A'}x_i\in B$, but $z_j$ is just a particular cycle.

Remains to show that we may find $j$ such that $z_j$ is a nonzero cycle.
Suppose that $y_m\not =0$ for $m\in [r-1]$ and  choose  $j\in\supp f_r\setminus\supp f_m$. It follows that $z_j$ is a nonzero cycle because $y_mf_me_{[n]\setminus  (\{x_j\}\cup\ \supp f_m)}$ is present in $z_j$.
 Thus $\depth_SI/J\leq d+1$ by \cite[Theorem 1.6.17]{BH}.
\hfill\ \end{proof}

\begin{Remark}\label {r'} {\em Applying the above lemma to Example \ref{e4} we see that $\depth_SI/J\leq 3$ because $\depth_SI/(J,f_5)=2$.}
\end{Remark}

  Let $P_{I\setminus J}$  be the poset of all squarefree monomials of $I\setminus J$  with the order given by the divisibility. Let $ P$ be a partition of  $P_{I\setminus J}$ in intervals $[u,v]=\{w\in  P_{I\setminus J}: u|w, w|v\}$, let us say   $P_{I\setminus J}=\cup_i [u_i,v_i]$, the union being disjoint.
Define $\sdepth  P=\min_i\deg v_i$ and  the  {\em Stanley depth} of $I/J$ given by $\sdepth_SI/J=\max_{ P} \sdepth  P$, where $ P$ runs in the set of all partitions of $P_{I\setminus J}$ (see  \cite{HVZ}, \cite{S}).  Stanley's Conjecture says that  $\sdepth_S I/J\geq \depth_S I/J$.

In Example \ref{e2} we have $s=5<q+3=7$ and so it follows that $\depth_SI/J\leq d+1$ by \cite[Theorem 1.3]{P0} (see also \cite[Theorem 2]{AP}). Next example follows \cite[Example 1.6]{P1} and has $s=q+r$, $\sdepth_SI/J=d+2$ but $\depth_SI/J=d$.

\begin{Example}\label{e} {\em Let $n=12$, $r=11$, $f_1=x_{12}x_1$, $f_2=x_{12}x_2$, $f_3=x_{12}x_3$, $f_4=x_{12}x_4$, $f_5=x_{12}x_5$, $f_6=x_{12}x_6$, $f_7=x_6x_7$, $f_8=x_6x_8$, $f_9=x_6x_9$, $f_{10}=x_6x_{10}$, $f_{11}=x_6x_{11}$, $J=(x_7,\ldots,x_{11})(f_1,\ldots,f_5)+(x_1,\ldots,x_{5})(f_7,\ldots,f_{11})+f_6(x_9,\ldots,x_{11})$, $I=(f_1,\ldots,f_{11})$. We have $B=\{w_{ij}:1\leq i<j\leq 5\}\cup \{w_{kt}:6< k<t\leq 11\}\cup \{w_{i6}:i\in [8],i\not=6\}$, that is $s=|B|=27$.
 Let $c_1=x_6w_{12}$,   $c_2=x_6w_{23}$,  $c_3=x_6w_{34}$,  $c_4=x_6w_{45}$,  $c_5=x_6w_{15}$, $c_6=x_8w_{67}$, $c_7=x_9w_{78}$, $c_8=x_{10}w_{89}$, $c_9=x_{11}w_{9,10}$, $c_{10}=x_7w_{10,11}$,  $c_{11}=x_7w_{8,11}$, $c'_{13}=x_4w_{13}$, $c'_{14}=x_5w_{14}$,  $c'_{24}=x_6w_{24}$,
   $c'_{25}=x_3w_{25}$,  $c'_{35}=x_6w_{35}$. These are all monomials of $C$, that is $q=|C|=16$ and so $s=q+r$. The intervals $[f_i,c_i]$, $i\in [11]$ and $[w_{13},c'_{13}]$, $[w_{14},c'_{14}]$, $[w_{24},c'_{24}]$, $[w_{25},c'_{25}]$, $[w_{35},c'_{35}]$ induce a partition $P$ on $I/J$ with sdepth $4$.

We claim that $\depth_SS/J=2$. Indeed, let
$J'=(x_7,\ldots,x_{11})(f_1,\ldots,f_5)+(x_1,\ldots,x_{5})(f_7,\ldots,f_{11})=(x_{12},x_6)(x_7,\ldots,x_{11})(x_1,\ldots,x_{5})$. By \cite[Theorem 1.4]{Ap} we get $\depth_SS/J'=2=d$. Set $J_1=J'+(x_{12}x_6x_9)$, $J_2=J_1+(x_{12}x_6x_{10})$. We have $J=J_2+(x_{12}x_6x_{11})$. In the exact sequences
$$0\to (x_{12}x_6x_9)/(x_{12}x_6x_9)\cap J'\to S/J'\to S/J_1\to 0,$$
$$0\to (x_{12}x_6x_{10})/(x_{12}x_6x_{10})\cap J_1\to S/J_1\to S/J_2\to 0,$$
$$0\to (x_{12}x_6x_{11})/(x_{12}x_6x_{11})\cap J_2\to S/J_2\to S/J\to 0$$
the first terms have depth $\geq 5$. Applying the Depth Lemma by recurrence we get our claim.

Now we see that $\depth_SS/I=6$. Set $I_j=(f_1,\ldots,f_j)$ for $6\leq j\leq 11$. We have $I=I_{11}$, $I_6=x_{12}(x_1,\ldots,x_6)$ and $\depth_SS/I_6=6$. In the exact sequences
$$0\to (f_{j+1})/(f_{j+1})\cap I_j\to S/J_j\to S/I_{j+1}\to 0,$$
$6\leq j<11$ we have $(f_{j+1})\cap I_j=f_{j+1}(x_{12},x_7,\ldots,x_{j})$ and so $\depth_S(f_{j+1})/(f_{j+1})\cap I_j=12-(j-5)\geq 7$
for $6\leq j<11$. Applying the Depth Lemma by recurrence we get $\depth_SS/I_{j+1}=6$ for $6\leq j<11$ which is enough.

Finally using the Depth Lemma in the exact sequence
$$0\to I/J\to S/J\to S/I\to 0$$
 it follows $\depth_SI/J=2=d$.   }
 \end{Example}

The following lemma is the key in the proof of Theorem \ref{t} and its proof is given in the next section.
\begin{Lemma} \label{m} Suppose that  $E=\emptyset$, $C\subset  C_3$, $C\cap W=\emptyset$ and Theorem \ref{t} holds for $r'<r$. Then $\depth_SI/J\leq d+1$.
\end{Lemma}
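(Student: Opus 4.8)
The plan is to argue by induction on $r$, using the two reduction lemmas already at hand. To bound $\depth_S I/J\le d+1$ it suffices, by Lemma \ref{d'}, to exhibit a single generator $f_\rho$ for which $\depth_S I/(J,f_\rho)=d$; failing that, by Lemma \ref{d} it suffices to exhibit a $b\in B$ with $\depth_S I/(J,b)=d$ and no multiple in $C$. So the whole proof is organised around locating a ``good'' removal and certifying that the resulting depth is exactly $d$. The base of the induction is clean: since every element of $C_3$ is a least common multiple of at least three of the $f_i$, the hypothesis $C\subset C_3$ forces $C=\emptyset$ once $r$ is small, and then for any $b\in B$ there is no $c\in C$ with $b\mid c$, so the first branch of Lemma \ref{d} applies.

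First I would set up the reduction that feeds the induction. For a fixed generator $f_\rho$ put $I^\rho=(f_i:i\neq\rho)$ and $J^\rho=(J\cap I^\rho)+(w_{i\rho}:i\neq\rho)$. Since $f_\rho\in I$ and all ideals are monomial, $I^\rho\cap(J+(f_\rho))=(I^\rho\cap J)+(I^\rho\cap(f_\rho))$, whence the second isomorphism theorem gives $I/(J,f_\rho)\cong I^\rho/J^\rho$; this is again a configuration with $E=\emptyset$ but only $r-1$ degree-$d$ generators. The condition $C^\rho\cap W^\rho=\emptyset$ is immediate, since $C^\rho\subseteq C$, $W^\rho\subseteq W$, and $C\cap W=\emptyset$. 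The delicate point is the pathological hypothesis: one checks that the surviving degree-$(d+1)$ monomials are $B^\rho=\{b\in B\cap I^\rho:b\neq w_{i\rho}\ \text{for all }i\}$, and here I would use $C\subset C_3$ to force each such $b$ to be a pairwise least common multiple of two of the remaining generators. Indeed, if some $b\in B^\rho$ were divisible by only one generator it would lie in $B\setminus W$, and then --- because every $c\in C$ lies in $C_3$ and hence has all its degree-$(d+1)$ divisors from $B$ inside $W$ --- such a $b$ could have no multiple in $C$, which is exactly the hypothesis needed to invoke the first branch of Lemma \ref{d}.

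This last observation splits the proof into two cases. If some relevant $B^\rho$ meets $B\setminus W$, I pick such a $b$, note it has no multiple in $C$, and argue that $\depth_S I/(J,b)=d$, so that Lemma \ref{d} closes the case. Otherwise $B^\rho\subseteq W^\rho$ for a suitable $\rho$, the reduced pair $I^\rho/J^\rho$ satisfies all the hypotheses of Theorem \ref{t} for $r-1$ generators, and the induction hypothesis yields $\depth_S I^\rho/J^\rho=\depth_S I/(J,f_\rho)\le d+1$. The genuine obstacle --- and the reason the argument is not purely formal --- is that Lemmas \ref{d} and \ref{d'} demand the sharp equality $\depth_S I/(J,f_\rho)=d$ (or $\depth_S I/(J,b)=d$), whereas the induction only returns the inequality $\le d+1$. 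Closing this gap means showing that under the correct removal the depth actually falls to its minimum value $d$ rather than stalling at $d+1$; I expect to do this by descending through the reductions until the number of triple least common multiples is so small that the configuration coincides with one of the explicitly computed base cases, where the equality $\depth=d$ is certified directly (this is precisely the role played by Examples \ref{e4}, \ref{e}, \ref{e3}). Verifying that the combinatorics forced by $C\subset C_3$ really does drive the recursion down to these base configurations is where I anticipate the main difficulty.
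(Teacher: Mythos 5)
Your proposal correctly identifies the right tools (Lemmas \ref{d} and \ref{d'}), correctly performs the reduction $I/(J,f_\rho)\cong I^\rho/J^\rho$, and correctly handles the easy branch where some $b\in B$ has no multiple in $C$. But you also correctly name, and then do not close, the gap on which the whole lemma turns: Lemmas \ref{d} and \ref{d'} need the \emph{exact} equality $\depth_S I/(J,f_\rho)=d$ (resp.\ $\depth_S I/(J,b)=d$), while the induction hypothesis (Theorem \ref{t} for $r-1$) only returns $\le d+1$. Your plan to resolve this by ``descending through the reductions until the configuration coincides with one of the explicitly computed base cases'' is not an argument: Examples \ref{e4}, \ref{e}, \ref{e3} are illustrations that motivated the proof, not a complete or even identifiable family of terminal configurations, and nothing in your combinatorial setup guarantees the recursion lands on one of them. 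Moreover, it can genuinely happen that $\depth_S I/(J,f_\rho)=d+1$ for the removal you choose (Example \ref{e4} shows the outcome depends delicately on what is removed), so the strategy of certifying depth exactly $d$ after deleting a single generator cannot work in general.

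The paper closes this gap by a different mechanism that your proposal never touches. It groups the generators into ``stars'' $U_i$ (generators sharing a degree-$(d-1)$ gcd $u_i$, with $|U_i\cap U_j|\le 1$) and removes whole stars one generator at a time (Lemma \ref{l3}). At each step there are two cases: if the depth of the smaller quotient is $d$, Lemma \ref{d'} applies; if it is $d+1$, Lemma \ref{l2} shows the bound $d+1$ persists by computing the depth of the kernel $(f_r)/(f_r)\cap(J,I')$ \emph{exactly}: after dividing by $f_r$ this module is $K[\Gamma]/Q$ with $Q$ generated by all squarefree quadrics in the variables $\Gamma$, hence has depth exactly $1$ over $K[\Gamma]$. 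The identification of $Q$ as containing all quadrics is precisely where the hypothesis $C\cap W=\emptyset$ is used (it forces $w_{tt'}\in J$ for $t,t'$ in different equivalence classes); in your proposal that hypothesis is only ever checked to pass to the reduced pair and is never exploited, which is a reliable sign that the essential idea is missing. The remaining structure (Proposition \ref{pr} for the case $\cap_i U_i\ne\emptyset$, then peeling off $L_k=\cup_{j\ge k}U_j$ by recurrence) is likewise absent. So the proposal is a reasonable reconnaissance of the problem, but the proof it sketches would stall exactly at the point you flag as the main difficulty.
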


\begin{Proposition} \label{p} Suppose that   $C\cap (f_1,\ldots,f_r)\subset  C_3$, $C\cap W=\emptyset$ and  Theorem \ref{t} holds for  $r'<r$. Then $\depth_SI/J\leq d+1$.
\end{Proposition}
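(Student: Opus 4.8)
The plan is to strip off the entire higher-degree generating set $E$ in a single step, comparing $I/J$ with the quotient produced by the degree-$d$ part alone, and then to apply Lemma \ref{m} to that part. Put $I_0=(f_1,\ldots,f_r)$ and $J_0=I_0\cap J$, and consider the short exact sequence
$$0\to I_0/J_0\to I/J\to I/(I_0+J)\to 0,$$
whose first map is induced by the inclusion $I_0\hookrightarrow I$ (so the image is $(I_0+J)/J$ and the cokernel is $I/(I_0+J)$). The point is that $I_0/J_0$ is exactly a pathological case with $E=\emptyset$, so Lemma \ref{m} governs it, whereas the cokernel $I/(I_0+J)$ is assembled only from the generators in $E$ and hence has depth $\geq d+1$ for purely degree-theoretic reasons. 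Feeding both facts into the Depth Lemma will force $\depth_S I/J\leq d+1$.

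First I would verify that Lemma \ref{m} applies to $I_0/J_0$. Since $I_0$ is still minimally generated by $f_1,\ldots,f_r$, here $E=\emptyset$; moreover $J_0$ is generated in degrees $\geq d+1$ and $f_i\in I_0\setminus J$, so $J_0\subsetneq I_0$. Writing $B^0,C^0$ for the squarefree monomials of degrees $d+1$ and $d+2$ of $I_0\setminus J_0=I_0\setminus J$, the inclusion $I_0\subseteq I$ gives $B^0\subseteq B$ and $C^0\subseteq C\cap(f_1,\ldots,f_r)$, whence $C^0\cap W\subseteq C\cap W=\emptyset$. The delicate point is transferring the $C_3$-hypothesis. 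Because every member of $E$ of degree $d+1$ is a minimal generator of $I$ and is therefore divisible by no $f_i$, no such member lies in $I_0$; this yields $B^0\cap E=\emptyset$, i.e. $B^0\subseteq B\setminus E$. Consequently, for $c\in C^0\subseteq C\cap(f_1,\ldots,f_r)\subseteq C_3$ every degree $d+1$ divisor $b$ of $c$ lying in $B^0$ already lies in $B\setminus E$, so $c\in C_3$ forces $b\in W$; this is precisely the assertion $c\in C_3^0$, where $C_3^0$ is the analogue of $C_3$ for $I_0/J_0$. Thus $C^0\subseteq C_3^0$, and Lemma \ref{m} (invoking the standing hypothesis that Theorem \ref{t} holds for $r'<r$) gives $\depth_S I_0/J_0\leq d+1$.

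Next I would bound the cokernel. Since $I_0\subseteq I_0+J$ and $I=I_0+(E)$, collapsing the shared generators gives $I/(I_0+J)\cong (E)/\bigl((E)\cap(I_0+J)\bigr)$, a quotient of the ideal $(E)$, which is generated in degrees $\geq d+1$. The crux is that the denominator is generated in degrees $\geq d+2$: its generators are the $\lcm(g,h)$ with $g\in E$ and $h$ a generator of $I_0+J$, and because each $g\in E$ is a minimal generator of $I$ it is divisible by no $f_i$ and satisfies $g\notin J$, so $h\nmid g$ in every case and $\deg\lcm(g,h)>\deg g\geq d+1$. Applying \cite[Proposition 3.1]{HVZ} (see \cite[Lemma 1.1]{P}) with $d+1$ in place of $d$ then gives $\depth_S I/(I_0+J)\geq d+1$. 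Finally, the Depth Lemma inequality $\depth_S I_0/J_0\geq\min\{\depth_S I/J,\ \depth_S I/(I_0+J)+1\}$, together with $\depth_S I_0/J_0\leq d+1$ and $\depth_S I/(I_0+J)\geq d+1$, forces $\depth_S I/J\leq d+1$, since otherwise the right-hand minimum would be at least $d+2$. The main obstacle I anticipate is the combinatorial bookkeeping in the two transfer steps—establishing $C^0\subseteq C_3^0$ and that the relations defining the cokernel sit in degrees $\geq d+2$—both of which rest on the single structural fact that the elements of $E$ are genuine minimal generators of $I$, divisible by none of the $f_i$.
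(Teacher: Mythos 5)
Your proposal is correct and follows essentially the same route as the paper: split off $I_0=(f_1,\ldots,f_r)$ via the exact sequence $0\to I_0/(I_0\cap J)\to I/J\to I/(I_0+J)\to 0$, bound the cokernel below by $d+1$ since it is generated by $E$, apply Lemma \ref{m} to the first term, and conclude with the Depth Lemma. Your extra care in checking that $I_0/(I_0\cap J)$ really satisfies the hypotheses of Lemma \ref{m} (in particular the transfer of the $C_3$ condition) only makes explicit what the paper leaves implicit.
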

\begin{proof}
Suppose that $E\not =\emptyset$, otherwise  apply Lemma \ref{m}. Set $I'=(f_1,\ldots,f_r)$, $J'=J\cap I'$. In the exact sequence
$$0\to I'/J'\to I/J\to I/(I',J)\to 0$$
the last term is isomorphic to something generated by $E$ and so its depth is $\geq d+1$. The first term satisfies the conditions of Lemma \ref{m} which gives $\depth_SI'/J'\leq d+1$. By the Depth Lemma we get $\depth_SI/J\leq d+1$ too.
\hfill\ \end{proof}

{\bf Proof of Theorem \ref{t}}

Apply induction on $r$. If $r<5$ then $B\cap (f_1,\ldots,f_r)\subset W$ implies   $|B\cap (f_1,\ldots,f_r)|<2r$ and so $\sdepth_SI/J\leq d+1$ and even $\depth_SI/J\leq d+1$ by \cite[Proposition 2.4]{Sh} (we may also apply \cite[Theorem 0.3]{P1}). Suppose that  $r\geq 5$. Since all divisors of a monomial $c\in C\cap (f_1,\ldots,f_r)$ of degrees  $d+1$ are in $B$, they are also in $W$ by our hypothesis.   Thus  $C\cap (f_1,\ldots,f_r)\subset C_3$ and we may apply Proposition \ref{p} under induction hypothesis.  $\hfill\ \square$

\begin{Corollary} \label{c} Suppose that $B\cap (f_1,\ldots,f_r)\subset W$ and  $C\cap W=\emptyset$. Then $\depth_SI/J\leq \sdepth_SI/J$, that is the Stanley Conjecture holds for $I/J$.
\end{Corollary}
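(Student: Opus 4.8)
The plan is to derive the Stanley inequality $\sdepth_S I/J \geq \depth_S I/J$ directly from the depth bound already secured. By Theorem \ref{t} the hypotheses give $\depth_S I/J \leq d+1$, while $\depth_S I/J \geq d$ by \cite[Proposition 3.1]{HVZ}, so $\depth_S I/J \in \{d,d+1\}$. Independently I would record the cheap lower bound $\sdepth_S I/J \geq d$: since $I$ is generated in degrees $\geq d$, every squarefree monomial of $I\setminus J$ has degree $\geq d$, so in any interval partition $P_{I\setminus J}=\bigcup_i[u_i,v_i]$ each top satisfies $\deg v_i\geq\deg u_i\geq d$, whence $\sdepth P\geq d$. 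This already settles the case $\depth_S I/J=d$, where $\sdepth_S I/J\geq d=\depth_S I/J$. Note that the special hypotheses $B\cap(f_1,\dots,f_r)\subset W$ and $C\cap W=\emptyset$ enter only through Theorem \ref{t}; the remaining work is generic.

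It remains to treat $\depth_S I/J=d+1$, where I must upgrade the estimate to $\sdepth_S I/J\geq d+1$. The first step is to extract combinatorial data from the vanishing of the top Koszul homology. As in the proof of Lemma \ref{d'}, $I/J$ is a squarefree module, so $\Tor^S_\bullet(K,I/J)\cong H_\bullet(x;I/J)$ is concentrated in squarefree multidegrees; a degree count shows that the only squarefree multidegree contributing to $H_{n-d}(x;I/J)$ is $[n]$, and that there the Koszul differential reduces to the linear map $M\colon K^{\,r}\to K^{\,|B|}$ whose matrix carries entry $\pm1$ in position $(b,i)$ exactly when $f_i\mid b$ and $0$ otherwise. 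By \cite[Theorem 1.6.17]{BH}, $\depth_S I/J=d+1$ forces $H_{n-d}(x;I/J)=0$, that is $\Ker M=0$, so the columns of $M$ are linearly independent over $K$.

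From linear independence I would read off a matching. For any $S\subseteq\{f_1,\dots,f_r\}$ the submatrix on the columns indexed by $S$ has rank $|S|$, hence at least $|S|$ nonzero rows; as the nonzero rows are precisely the $b\in B$ divisible by some $f_i\in S$, Hall's condition holds and there is an injection $\phi\colon\{f_1,\dots,f_r\}\to B$ with $f_i\mid\phi(f_i)$. Then the intervals $[f_i,\phi(f_i)]$ (each of top degree $d+1$), together with the singletons $[v,v]$ for every remaining squarefree monomial $v\in I\setminus J$ (necessarily of degree $\geq d+1$), form a partition of $P_{I\setminus J}$ of sdepth $\geq d+1$. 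This gives $\sdepth_S I/J\geq d+1=\depth_S I/J$, completing both cases.

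The main obstacle I anticipate is the middle step: the passage from the homological equality $\depth_S I/J=d+1$ to the combinatorial matching. Concretely, pinning down that $H_{n-d}(x;I/J)$ lives only in multidegree $[n]$ and equals $\Ker M$ requires the squarefree-module structure together with careful degree bookkeeping in the Koszul complex, and the implication $\Ker M=0\Rightarrow$ Hall's condition must be used in the correct direction (linear independence of the columns gives, but is not given by, the marriage condition). Once the saturating matching $\phi$ is in hand, checking that the proposed intervals are pairwise disjoint and exhaust $P_{I\setminus J}$ is routine.
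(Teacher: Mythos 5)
Your proposal is correct, but it takes a genuinely different route from the paper. The paper's proof is a two-line case split on the value of $\sdepth_SI/J$: if $\sdepth_SI/J=d$ it invokes \cite[Theorem 4.3]{P} (which asserts $\sdepth_SI/J=d\Rightarrow\depth_SI/J=d$) as a black box, and otherwise $\sdepth_SI/J\geq d+1\geq\depth_SI/J$ by Theorem \ref{t}. You instead split on the value of $\depth_SI/J$ and, in the case $\depth_SI/J=d+1$, prove directly that $\sdepth_SI/J\geq d+1$; this is exactly the contrapositive of the cited theorem, so in effect you have inlined a proof of \cite[Theorem 4.3]{P} rather than quoting it. Your argument for that implication is sound and consistent with the techniques the paper already uses in Lemma \ref{d'}: since $I/J$ is squarefree, $H_{n-d}(x;I/J)$ lives only in squarefree multidegrees, and a degree count shows the only contributing multidegree is $[n]$, where the incoming differential vanishes (no monomials of degree $d-1$ in $I$), so the homology equals $\Ker M$ for the incidence matrix $M$ between $\{f_1,\ldots,f_r\}$ and $B$; the passage from column-independence of $M$ to Hall's condition (rank of a column submatrix is bounded by its number of nonzero rows) and thence to a saturating matching $\phi$ is correct, and the resulting partition into the two-element intervals $[f_i,\phi(f_i)]$ plus singletons of degree $\geq d+1$ indeed has sdepth $\geq d+1$. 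What each approach buys: the paper's deduction is shorter and leans on published results, while yours is self-contained, uses the special hypotheses only through Theorem \ref{t}, and makes explicit the combinatorial content (vanishing of the top Koszul homology yields a matching of the degree-$d$ generators into $B$) --- at the cost of reproving a known theorem.
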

\begin{proof} If $\sdepth_SI/J=d$ then apply \cite[Theorem 4.3]{P}, otherwise apply Theorem \ref{t}.
\hfill\ \end{proof}

\section{Proof of Lemma \ref{m}.}

 We may suppose that $B\subset W$ because each monomial of $B$ must divide a monomial of $C$, otherwise we get $\depth_SI/J\leq d+1$ by \cite[Lemma 1.5]{PZ}. Then we may suppose that $B\subset \cup_i \supp f_i$ and we may reduce to the case when $[n]=\cup_i \supp f_i$ because then $\depth_SI/J=\depth_{\tilde S} (I\cap {\tilde S})/(J\cap {\tilde S})$ for ${\tilde S}=K[\{x_t:t\in \cup_i \supp f_i\}]$.

  On the other hand, we may suppose that for each $i\in [r]$ there exists $c\in C$ such that $f_i|c$, otherwise we may apply again \cite[Lemma 1.5]{PZ}. Since  $c\in C_3$, let us say $c$ is the least common multiple of $f_1,f_2,f_3$ we see that at least,let us say, $w_{12}\in B$. Then  $f_i\in (u_1)$, $i\in [2]$ for some monomial $u_1=(f_1f_2)/w_{12}$ of degree $d-1$.

 We may assume that $f_i\in (u_1)$ if and only if $i\in [k_1]$ for some $2\leq k_1\leq r$.
Set $U_1=\{f_1,\ldots,f_{k_1}\}$.
We also assume that
$$\{u_i:i\in [e]\}=\{u:u=gcd(f_i,f_j), \deg u=d-1, i\not = j\in [r]\},$$
and define
$$U_i=\{f_j:f_j\in (u_i), j\in [r]\}$$
for each $i\in [e]$.
 Since  each $f_t\in U_i$  divides a certain $c\in C$ we see from our construction that  there exist $f_p,f_l\in U_i$ such that $w_{tp},w_{tl}\in B$.  Note that if $|U_i\cap U_j|\geq 2$ then we get $u_i=u_j$ and so $i=j$. Thus $|U_i\cap U_j|\leq 1$ for all $i,j\in [e]$, $i\not =j$.

  Suppose that  $w_{ij}\in J$ for all $i\in [k_1]$ and for all $j>k_1$,
 let us say $f_i=u_1x_i$ for $i\in [k_1]$.  Set $I'=(f_1,\ldots,f_{k_1})$, $J'=I'\cap J$  and
 $\hat S=K[\{x_i:i\in [k_1]\cup \supp u_1\}]$. Then
 $\depth_SI'/J'=\depth_{\hat S} I'\cap \hat S/J'\cap \hat S=\deg u_1+\depth_{\hat S} (x_1,\ldots,x_{k_1})\hat S/(J':u_1)\cap \hat S=d$. Thus $\depth_SI/J=d$ by the Depth Lemma applied to the exact sequence
$$0\to I'/J'\to I/J\to I/(I',J)\to 0,$$
since the last term has depth $\geq d$  being generated by squarefree monomials of degree $\geq d$. In particular, $\depth_SI/J=d$  if $e=1$.
If $e>1$ we may assume that for each $i\in [e]$ there exists $j\in [e]$ with $U_i\cap U_j\not =\emptyset$.
\begin{Example}{\em Back to Example \ref{e} note that we may take $u_1=x_{12}$, $u_2=x_6$ and $U_1=\{f_1,\ldots,f_6\}$, $U_2=\{f_6,\ldots,f_{11}\}$.}
\end{Example}
\begin{Lemma}\label{l2} Suppose that $e\geq 2$ and $f_r\in U_e$. Let $I'$ be the ideal generated by all $f_k\in U_e\setminus \{f_r\}$. If
$\depth_SI/(J,I',f_r)\geq d+1$ and
there exists $t$ with $f_t\in (\cup_{i=1}^{e-1} U_i)\setminus U_e$ such that $w_{rt}\in B\setminus I'$ then $\depth_SI/(J,I')=d+1$.
\end{Lemma}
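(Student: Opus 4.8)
The plan is to compare $I/(J,I')$ with the smaller quotient appearing in the hypothesis through the short exact sequence
$$0\to A\to I/(J,I')\to I/(J,I',f_r)\to 0,$$
where $A=((f_r)+(J,I'))/(J,I')\cong\bigl(S/((J,I'):f_r)\bigr)(-d)$ since $f_r$ is squarefree of degree $d$. By hypothesis the right-hand term has depth $\geq d+1$. Applying the Depth Lemma in both directions, one gets $\depth_SI/(J,I')\geq\min(\depth_SA,d+1)$ and, from $\depth_SA\geq\min(\depth_SI/(J,I'),\ \depth_SI/(J,I',f_r)+1)$, that $\depth_SI/(J,I')\leq d+1$ whenever $\depth_SA=d+1$. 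Thus the whole lemma reduces to the single equality $\depth_S S/((J,I'):f_r)=d+1$.

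Next I would analyze the colon ideal. Writing $f_r=u_ex_{a_r}$ and $f_k=u_ex_{a_k}$ for the members $f_k\in U_e\setminus\{f_r\}$, each of degree $d$, one computes $(f_k):(f_r)=(x_{a_k})$, hence $I':f_r=(x_{a_k}:f_k\in U_e\setminus\{f_r\})$ is generated by the $|U_e|-1$ extra variables, and
$$(J,I'):f_r=(J:f_r)+(x_{a_k}:f_k\in U_e\setminus\{f_r\}).$$
Modding out the variables $x_{a_k}$ passes to a polynomial subring, and the remaining part is governed by the degree $d+1$ multiples of $f_r$: a variable $x_j$ with $j\notin\supp f_r$ lies in $J:f_r$ exactly when $x_jf_r\in J$, whereas $x_jf_r\in B$ forces, via $B\subset W$, that $x_jf_r$ is some $w_{rm}$. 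This splitting-off of the $x_{a_k}$, together with the absence of degree $d+2$ lcm's ($C\cap W=\emptyset$ and $C\subset C_3$), is what I would use to produce the lower bound $\depth_S S/((J,I'):f_r)\geq d+1$.

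For the matching upper bound I would, as in Lemma \ref{d'}, exhibit a nonzero class in $H_{n-d-1}(x;S/((J,I'):f_r))\cong\Tor^{n-d-1}_S(K,A)$, the natural witness being $w_{rt}$. Here hypothesis (2) is essential: $w_{rt}=x_{j_0}f_r\in B\setminus I'$ gives $x_{j_0}\notin(J,I'):f_r$ and $j_0\neq a_k$ for every $k$, so $w_{rt}$ is a genuine nonzero degree $d+1$ element of $A$. Computing $\partial(w_{rt}e_{[n]\setminus\supp w_{rt}})$, each degree $d+2$ multiple $x_jw_{rt}=x_jx_{j_0}f_r$ either lies in $I'$ (precisely when $j=a_k$) or must be driven into $J$; the conditions $C\cap W=\emptyset$ and $C\subset C_3$ are exactly what force such a monomial into $(J,I')$ unless it arises from a second multiple $x_jf_r\in B$, in which case the chain is corrected by the terms $w_{rm}e_{[n]\setminus\supp w_{rm}}$, and $w_{rt}\notin I'$ guarantees the corrected cycle is not a boundary.

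The main obstacle will be precisely this last bookkeeping step, namely controlling $J:f_r$: one must know exactly which degree $d+2$ squarefree multiples of $f_r$ lie in $J$, and must verify both that the corrected Koszul chain is a true cycle and that it survives in homology. This is where the pathological hypotheses $C\cap W=\emptyset$, $C\subset C_3$ (and $B\subset W$ from the reduction at the start of this section) are indispensable, since without them a degree $d+2$ monomial belonging to $C$ would persist in $I/(J,I')$ and break the cycle condition, leaving $\depth_SA$ uncontrolled.
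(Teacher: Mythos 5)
Your opening reduction is exactly the paper's: from the exact sequence
$$0\to (f_{r})/(f_{r})\cap (J,I')\to I/(J,I')\to I/(J,I',f_r)\to 0$$
and the hypothesis $\depth_SI/(J,I',f_r)\geq d+1$, the Depth Lemma reduces everything to showing that the first term (your $A\cong (S/((J,I'):f_r))(-d)$) has depth exactly $d+1$. That part is correct. The gap is that this single remaining equality is the entire content of the lemma, and you do not prove it: you describe what you ``would use'' for the lower bound and what the ``natural witness'' for the upper bound should be, and you explicitly defer the decisive step as ``bookkeeping.'' That bookkeeping is precisely where the paper does its work, and two concrete structural facts are missing from your outline. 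First, you must determine the degree $d+1$ squarefree monomials of $(f_r)\setminus(J,I')$: they are the $w_{rt}=x_{\gamma_t}f_r$ with $f_t\in(\cup_{i=1}^{e-1}U_i)\setminus U_e$ and $w_{rt}\in B\setminus I'$, and --- crucially --- two such $f_t\sim f_{t'}$ lying in a common $U_i$ give the \emph{same} monomial $w_{rt}=w_{rt'}$ (writing $f_r=x_kx_pv$, $f_t=x_kx_mv$, $f_{t'}=x_px_mv$), so each equivalence class of the paper's relation on $A$ contributes exactly one variable $x_{\gamma_t}$ to the set $\Gamma$. Second, for $x_{\gamma_t},x_{\gamma_{t'}}$ coming from \emph{different} classes one has $x_{\gamma_t}x_{\gamma_{t'}}f_r=w_{tt'}\in W$, hence $w_{tt'}\in J$ because $C\cap W=\emptyset$. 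These two facts identify $(f_r)/(f_r)\cap(J,I')$ with $f_r\cdot K[\Gamma]/Q$, $Q$ the ideal of squarefree quadrics of $K[\Gamma]$, whose depth over $K[\Gamma]$ is $1$; that gives both bounds at once. Without them neither your lower bound nor your cycle computation is grounded.

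A further caution about your proposed upper bound via a Koszul cycle supported on the single term $w_{rt}e_{[n]\setminus\supp w_{rt}}$: even after the bookkeeping above shows this is a cycle, it need not survive in homology in the boundary case $|\Gamma|=1$, where $\partial(f_re_{[n]\setminus\supp f_r})$ has $w_{rt}e_{[n]\setminus\supp w_{rt}}$ as its only nonzero term, so your witness class vanishes. (The depth is still $d+1$ there, but your argument would not detect it; the paper's identification with $K[\Gamma]/Q$ handles $|\Gamma|=1$ and $|\Gamma|\geq 2$ uniformly.) So the route you sketch is workable in spirit but, as written, neither establishes the key combinatorial description of $(J,I'):f_r$ nor yields the upper bound in all cases; the lemma is not yet proved.
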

\begin{proof}
Using the Depth Lemma applied to the exact sequence
$$0\to (f_{r})/(f_{r})\cap (J,I')\to I/(J,I')\to I/(J,I',f_r)\to 0$$
 we see that it is enough to show that
 $\depth_S (f_{r})/(f_{r})\cap (J,I')=  d+1$.
 By our hypothesis the squarefree monomials from $(f_{r})\setminus (J,I')$ have the form $w_{rt'}$ for some $t'$ with
 $f_{t'}\in (\cup_{i=1}^{e-1} U_i)\setminus U_e$ and   $w_{rt'}\in B\setminus I'$.

 Next we will describe the above set of monomials.
 If there exists no $U_{l}$ containing $f_t$, $f_{t'}$ then their contributions to $(f_{r})/(f_{r})\cap (J,I')$ consist in two different monomials $w_{rt}$,  $w_{rt'}$. Otherwise, we must have $f_r=x_kx_pv$, $f_t=x_kx_mv$ and $f_{t'}=x_px_mv$  for some different $k,m,p\in [n]$ and one monomial $v$ of degree $d-2$. Thus $w_{rt}= w_{rt'}$ and the contributions of $f_t$, $f_{t'}$ consist in just one monomial.  Let $A$ be the set of all $f_k \in  (\cup_{i=1}^{e-1} U_i)\setminus U_e$ such that $w_{rk}\in B\setminus I'$  and  define an equivalence relation on $A$ by $f_t\sim f_{t'}$ if $f_t, f_{t'}\in U_i$ for some $i\in [e-1]$.
For some $f_t$ from an equivalence class of $A/\sim$ we have $w_{rt}=x_{\gamma_t}f_{r}$ for one $\gamma_t\in [n]$.  Let $\Gamma$ be the set of all these variables $x_{\gamma_t}$ for which $w_{rt}\not\in (J,I')$. For two $x_{\gamma_t}$, $x_{\gamma_{t'}}$ corresponding to different classes we have
$w_{tt'}=x_{\gamma_t}x_{\gamma_{t'}}f_{r}$ since $f_t$, $f_{t'}$ are not in the same equivalence class. Thus $w_{tt'}\in J$ because otherwise $w_{tt'}\in C$ which is impossible by our hypothesis. Let $Q\subset K[\Gamma]$  be the ideal  generated by all squarefree quadratic monomials. The multiplication by $f_{r}$ gives a bijection between $K[\Gamma]/Q$ and $(f_{r})/(f_{r})\cap (J,I')$ because each squarefree monomial of $(B\cap(f_r))\setminus I'$ has the form $w_{rt}=f_rx_{\gamma_t}$ for some $t$, $x_{\gamma_t}$ being in $\Gamma$. Then $\depth_S (f_{r})/(f_{r})\cap (J,I')=d+\depth_{K[\Gamma]}K[\Gamma]/Q=d+1 $ since the variables of $f_r$ form a regular sequence for $f_{r})/(f_{r})\cap (J,I')$ (in the squarefree frame).
 Note that if $A/\sim$ has just one class of equivalence containing some $f_t$ with $w_{tr}\in B$ then $|\Gamma|=1$, $Q=0$
and also it holds $\depth_{K[\Gamma]}K[\Gamma]/Q=1$.
\hfill\ \end{proof}

\begin{Remark}\label{l2'} {\em In the notations of the above lemma suppose that $w_{rt}\in (J,I')$ for all $t$ with $f_t\in (\cup_{i=1}^{e-1} U_i)\setminus U_e$.
Then there exists no squarefree monomial of degree $d+1$ in $(f_r)\setminus (J,I')$ and so $\depth_S(f_r)/(f_r)\cap (J,I')=d$. It follows that $\depth_SI/(J,I')=d$ too.}
\end{Remark}

\begin{Lemma} \label{l3} Suppose that $e\geq 2$.  If
$\depth_SI/(J,(U_e))\leq  d+1$ then $\depth_SI/J\leq d+1$.
\end{Lemma}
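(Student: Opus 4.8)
The plan is to compare $I/J$ with $I/(J,(U_e))$ by means of the short exact sequence
$$0\to (U_e)/\big((U_e)\cap J\big)\to I/J\to I/(J,(U_e))\to 0,$$
writing $V=(U_e)/((U_e)\cap J)$ for the kernel and $N=I/(J,(U_e))$ for the cokernel, so that by hypothesis $\depth_SN\le d+1$. The decisive first step is to determine $\depth_SV$. Let $f_k=u_ex_{j_k}$, $k=1,\dots,m$, be the generators of $(U_e)$, so that $(U_e)=u_eL$ with $L=(x_{j_1},\dots,x_{j_m})$. Multiplication by $u_e$ is injective and identifies $V$, up to the harmless degree shift $\deg u_e=d-1$, with $L/(L\cap(J:u_e))$. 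I would then read off the squarefree monomials of $(U_e)\setminus J$ exactly as in the proof of Lemma \ref{l2}: in degree $d+1$ they are the $w_{kl}=u_ex_{j_k}x_{j_l}$ lying in $B$, while every squarefree monomial of degree $d+2$ of the form $w_{kl}$ is forced into $J$ because $C\cap W=\emptyset$. An analysis along these lines should present $V$, after splitting off the variables of $u_e$, in terms of a squarefree-quadric quotient $K[\Gamma]/Q$ of depth one, whence $\depth_SV=d+1$. Equivalently, $H_i(x;V)=0$ for every $i\ge n-d$ while $H_{n-d-1}(x;V)\ne 0$.

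With this in hand I pass to the long exact sequence of Koszul homology $H_\bullet(x;-)$ attached to the displayed sequence, in the spirit of Lemma \ref{d'}. Around homological degree $n-d-1$ it reads
$$0=H_{n-d}(x;V)\to H_{n-d}(x;I/J)\to H_{n-d}(x;N)\xrightarrow{\ \partial\ }H_{n-d-1}(x;V)\to H_{n-d-1}(x;I/J)\to\cdots,$$
the left-hand zero coming from $\depth_SV=d+1$. If $\depth_SN=d+1$, then $H_{n-d}(x;N)=0$, so $\partial$ emanates from $0$ and the nonzero module $H_{n-d-1}(x;V)$ injects into $H_{n-d-1}(x;I/J)$. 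Hence $H_{n-d-1}(x;I/J)\ne 0$ and $\depth_SI/J\le d+1$ by \cite[Theorem 1.6.17]{BH}; concretely this only transports a nonzero Koszul cycle of $V$ into $I/J$, as was carried out by hand in Lemma \ref{d'}.

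It remains to dispose of the borderline case $\depth_SN=d$, where $H_{n-d}(x;N)\ne 0$. From the same sequence $H_{n-d}(x;I/J)=\ker\partial$; if this kernel is nonzero I already obtain $\depth_SI/J\le d$, and otherwise $\partial$ is injective and I must show that its image does not exhaust $H_{n-d-1}(x;V)$, so that $H_{n-d-1}(x;I/J)$ survives. This is precisely the configuration isolated in Remark \ref{l2'}: putting $I'=(U_e\setminus\{f_r\})$ one has the auxiliary sequence $0\to (f_r)/((f_r)\cap(J,I'))\to I/(J,I')\to N\to 0$, and the vanishing of all admissible $w_{rt}$ in $(J,I')$ yields $\depth_SI/(J,I')=d$. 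Feeding this back lets me locate a generator whose removal keeps the depth equal to $d$, after which Lemma \ref{d'} (or Lemma \ref{d}) finishes.

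I expect the computation $\depth_SV=d+1$ to be the principal obstacle: it requires checking, via $C\subset C_3$ and $C\cap W=\emptyset$, that the only squarefree monomials of $(U_e)\setminus J$ are the prescribed least common multiples, and that the quadric quotient $K[\Gamma]/Q$ governing $V$ has depth exactly one. The secondary difficulty is the case $\depth_SN=d$, where I anticipate that the explicit description of $V$ from the first step, together with Remark \ref{l2'}, guarantees that the connecting map $\partial$ cannot annihilate all of $H_{n-d-1}(x;V)$.
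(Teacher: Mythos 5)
Your top-level plan --- the single exact sequence $0\to (U_e)/((U_e)\cap J)\to I/J\to I/(J,(U_e))\to 0$ combined with the long exact sequence of Koszul homology --- is genuinely different from the paper's proof, but it has two gaps, and the first looks fatal as stated. The paper never works with the whole of $(U_e)$ at once: writing $U_e=\{f_{k+1},\dots,f_r\}$ and $I'_k=(U_e\setminus\{f_{k+1},\dots,f_r\})$, it proves $\depth_SI/(J,I'_k)\le d+1$ by induction on the number of generators removed, at each step peeling off a \emph{single} $f$ and invoking Lemma~\ref{d'} when the depth after removal is $d$, and Lemma~\ref{l2} together with Remark~\ref{l2'} when it is $d+1$. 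The point of this one-at-a-time strategy is that the only auxiliary modules that ever need to be understood are the cyclic ones $(f)/((f)\cap(J,I'))$ with $I'$ already containing all the other generators of $U_e$; it is exactly this that forces every surviving degree-$(d+1)$ monomial to be an external $w_{ft}=fx_{\gamma_t}$ and yields the clean $K[\Gamma]/Q$ description.

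Your module $V=(U_e)/((U_e)\cap J)$ admits no such description: besides the external $w_{kt}$ with $f_t\notin U_e$, it contains the internal monomials $w_{kl}=u_ex_{j_k}x_{j_l}$ with $f_k,f_l\in U_e$ that lie in $B$, so $L/(L\cap(J:u_e))$ is an $m$-generated ideal modulo a monomial ideal whose depth is not controlled by the argument of Lemma~\ref{l2}. In particular $\depth_SV=d+1$ can fail: the running reductions only guarantee that each $f_k\in U_e$ has two $B$-neighbours \emph{inside} $U_e$, so nothing prevents some $f_k\in U_e$ from having $w_{kt}\in J$ for every $f_t\notin U_e$, in which case cycles supported on such generators can survive in $H_{n-d}(x;V)$ and $\depth_SV=d$; then the left-hand zero in your long exact sequence disappears and the first case collapses. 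The second gap is the case $\depth_SI/(J,(U_e))=d$: you correctly isolate the need to show that $\partial\colon H_{n-d}(x;N)\to H_{n-d-1}(x;V)$ is not surjective, but you give no argument, and Remark~\ref{l2'} does not supply one --- it concerns the single-generator quotient $(f_r)/((f_r)\cap(J,I'))$ under the extra hypothesis that all admissible $w_{rt}$ lie in $(J,I')$. Repairing both points essentially forces you back to the paper's generator-by-generator induction, at which stage the global exact sequence is no longer needed.
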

\begin{proof} Suppose that $U_e\supset \{f_{k+1},\ldots,f_r\}$ for some $k\leq r$.
 Let $I'_k$ be the ideal generated by all $f_t\in U_e\setminus \{f_{k+1},\ldots,f_r\}$.
 We  claim that $\depth_S I/ (J,I'_k)\leq   d+1$. Apply induction on $r-k$, the case $k=r-1$ being done in Lemma \ref{l2} and Remark \ref{l2'}.
 Assume that $r-k>1$ and note that $I'_{k+1}=(I'_k,f_{k+1})$. By induction hypothesis we have $\depth_S I/(J,I'_{k+1})\leq d+1$. If $\depth_S I/(J,I'_{k+1})=d$
 then we get $\depth_SI/(J,I'_k)\leq d+1$ by Lemma \ref{d'}. If $\depth_S I/(J,I'_{k+1})=d+1$ we get again  $\depth_SI/(J,I'_k)\leq d+1$ by Lemma \ref{l2} and Remark \ref{l2'}. This proves our claim.

 Now choose $r-k$ maxim, let us say  $U_e= \{f_{k+1},\ldots,f_r\}$. Then $I'_k=0$ and so we get $\depth_SI/J\leq d+1$.
\hfill\ \end{proof}

\begin{Example} \label{e3} {\em Let $n=6$, $r=8$, $d=2$, $f_1=x_1x_2$, $f_2=x_1x_3$, $f_3=x_1x_4$,  $f_4=x_2x_3$, $f_5=x_3x_5$, $f_6=x_2x_6$, $f_7=x_3x_6$,   $f_8=x_4x_6$, and $I=(f_1,\ldots,f_8)$,
$$J=(x_1x_2x_5,x_1x_3x_6,x_1x_4x_5,x_1x_4x_6, x_2x_3x_4, x_2x_5x_6,  x_3x_4x_5, x_3x_5x_6, x_4x_5x_6).$$
It follows that $U_1=\{f_1,\ldots,f_3\}$, $U_2=\{f_2,f_4,f_5,f_7\}$, $U_3=\{f_6,f_7,f_8\}$, $U_4=\{f_1,f_4,f_6\}$, $U_5=\{f_3,f_8\}$, $u_1=x_1$, $u_2=x_3$, $u_3=x_6$, $u_4=x_2$, $u_5=x_4$.  Let ${\tilde S}=K[x_1,\ldots,x_5]$, ${\tilde J}\subset {\tilde I}\subset {\tilde S}$ be the corresponding ideals given in Example \ref{e2}. For  $I'=(U_3)$ we have $I/(J,I')\cong {\tilde I}S/({\tilde J},x_6{\tilde I})S$. Thus  $\depth_SI/(J,I')=\depth_{\tilde S}{\tilde I}/{\tilde J}\leq 3=d+1$ by Example \ref{e2}  and so using Lemma \ref{l3} we get $\depth_SI/J\leq 3$ too.}
\end{Example}

\begin{Proposition}\label{pr} If $\cap_{i\in [e]}U_i\not=\emptyset$ then $\depth_SI/J\leq d+1$.
\end{Proposition}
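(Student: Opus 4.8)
The plan is to exploit the hypothesis $\cap_{i\in[e]}U_i\neq\emptyset$ in order to reduce to one of the lemmas already established, most naturally Lemma~\ref{l3} or the depth-$d$ computation performed at the start of Section~2. Fix a monomial $f_\ell\in\cap_{i\in[e]}U_i$. Since $f_\ell$ lies in every $U_i=\{f_j:f_j\in(u_i)\}$, the variable-support of $f_\ell$ meets each $u_i$; concretely $u_i\mid f_\ell$ for all $i$, and because $\deg u_i=d-1$ while $\deg f_\ell=d$, each $u_i$ is obtained from $f_\ell$ by deleting a single variable. The idea is that this forces a very rigid combinatorial structure on the $U_i$ sharing the common generator $f_\ell$, and that structure will let me either apply Lemma~\ref{d'} after passing to $I/(J,f_\ell)$ or apply Lemma~\ref{l3} to one of the $U_i$.

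First I would set $I'=(U_e)$ and examine $I/(J,(U_e))$. By Lemma~\ref{l3} it suffices to prove $\depth_SI/(J,(U_e))\leq d+1$. When $f_\ell\in\cap_i U_i$, killing the generators in $U_e$ removes $f_\ell$ as well (since $f_\ell\in U_e$), and in the quotient $I/(J,(U_e))$ the remaining generators $U_1,\ldots,U_{e-1}$ no longer share a common member through $f_\ell$. The key observation I would exploit is that each $u_i$ is $f_\ell$ with one variable deleted, so writing $f_\ell=u_ix_{a_i}$ for distinct variables $x_{a_i}$, the generators of $U_i$ other than $f_\ell$ are of the form $u_ix_b=f_\ell x_b/x_{a_i}$. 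This is exactly the local picture of Lemma~\ref{l2}: the relevant $w_{\ell t}$ are products $f_\ell x_{\gamma}$, and the squarefree-quadratic ideal $Q\subset K[\Gamma]$ controls the depth of the principal piece $(f_\ell)/(f_\ell)\cap(J,I')$, pinning it to $d+1$.

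The cleanest route I expect is a direct reduction rather than invoking Lemma~\ref{l3} blindly. I would consider the exact sequence
$$0\to (f_\ell)/(f_\ell)\cap(J,I'')\to I/(J,I'')\to I/(J,I'',f_\ell)\to 0,$$
where $I''$ is generated by the $U_i$-members forced to vanish, and argue that the left term has depth exactly $d+1$ by the $K[\Gamma]/Q$ computation of Lemma~\ref{l2}, while the right term has depth $\geq d+1$ by the inductive hypothesis that Theorem~\ref{t} holds for $r'<r$ (the number of generators strictly drops once $f_\ell$ is removed). The Depth Lemma then yields $\depth_SI/(J,I'')\leq d+1$, and unwinding via Lemma~\ref{l3} gives the conclusion for $I/J$. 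Throughout I would need the hypotheses $C\subset C_3$ and $C\cap W=\emptyset$ carried from Lemma~\ref{m}, which guarantee that any $w_{tt'}=x_{\gamma_t}x_{\gamma_{t'}}f_\ell$ crossing two distinct classes lands in $J$ rather than in $C$, exactly as in the proof of Lemma~\ref{l2}.

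The main obstacle I anticipate is bookkeeping the overlaps: a single $f_\ell$ sitting in every $U_i$ means the sets $U_i$ pairwise intersect in $f_\ell$ (and by $|U_i\cap U_j|\leq 1$ they intersect in \emph{only} $f_\ell$), so I must verify that after removing $f_\ell$ the remaining data still satisfies the pathological-case hypotheses needed to invoke the inductive version of Theorem~\ref{t} on the quotient $I/(J,f_\ell)$. In particular I must check that no new element of $C$ is created, that the condition $C\cap W=\emptyset$ is inherited, and that the assignment $t\mapsto x_{\gamma_t}$ producing $\Gamma$ behaves correctly when several $U_i$ all pass through $f_\ell$ simultaneously. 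Confirming that the cross-terms $w_{tt'}$ for $t,t'$ in different $U_i$ are genuinely forced into $J$ — and hence that $Q$ really is the full squarefree-quadratic ideal, giving $\depth_{K[\Gamma]}K[\Gamma]/Q=1$ — is the delicate combinatorial point on which the depth bound $d+1$ hinges.
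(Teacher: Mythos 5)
Your opening move is the right one and matches the paper: set $I'=(U_e)$ and reduce, via Lemma~\ref{l3}, to bounding $\depth_SI/(J,I')$. But you never actually establish that bound, and the route you sketch in its place contains a logical error. You propose to apply the Depth Lemma to
$$0\to (f_\ell)/(f_\ell)\cap(J,I'')\to I/(J,I'')\to I/(J,I'',f_\ell)\to 0$$
with the right-hand term having ``depth $\geq d+1$ by the inductive hypothesis that Theorem~\ref{t} holds for $r'<r$.'' Theorem~\ref{t} is an \emph{upper} bound ($\depth\leq d+1$); it cannot deliver the lower bound $\geq d+1$ you need there, and no such lower bound is available in general (the generic lower bound is only $\geq d$). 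Moreover, the claim that the left-hand term has depth exactly $d+1$ via the $K[\Gamma]/Q$ computation only holds when some $w_{\ell t}\in B\setminus I''$ survives; in the degenerate case of Remark~\ref{l2'} that depth is $d$, a case you do not treat. So the central inequality $\depth_SI/(J,(U_e))\leq d+1$ is left unproved.

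The idea you are missing is much simpler and is the whole content of the paper's proof: once $f_r\in\cap_{i\in[e]}U_i$ is killed together with the rest of $U_e$, the remaining sets $U_i'=U_i\setminus\{f_r\}$, $i\in[e-1]$, are \emph{pairwise disjoint}, because $|U_i\cap U_j|\leq 1$ forces any two of them to meet only in $f_r$. Disjointness means that for any one block all cross-lcms $w_{ij}$ with the complement lie in $J$, which is exactly the situation computed at the start of Section~2 and yields $\depth_SI/(J,(U_e))=d$ on the nose (not merely $\leq d+1$). Lemma~\ref{l3} then gives $\depth_SI/J\leq d+1$ immediately, with no need for Lemma~\ref{l2}, the $K[\Gamma]/Q$ analysis, or any induction on $r$. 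You correctly observed that the $U_i$ pairwise intersect only in $f_\ell$, but you did not draw from it the disjointness-after-quotienting conclusion that closes the argument.
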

\begin{proof} As we have seen $\depth_SI/J=d$ if $e=1$. Assume that $e>1$ and let us say $f_r\in \cap_{i\in [e]}U_i$. Set $I'=(U_e)$. In $I/(J,I')$ we have $(e-1)$ disjoint $U'_i=U_i\setminus \{f_r\}$, $i\in [e-1]$. It follows that $\depth_SI/(J,I')=d$ and so $\depth_SI/J\leq d+1$ by Lemma \ref{l3}.
\hfill\ \end{proof}

{\bf Proof of Lemma \ref{m}.}

 If $\cap_{i\in [e]}U_i\not=\emptyset$ then apply the above proposition. Otherwise,  suppose that
$f_r\in U_j$ if and only if $1\leq j<k$ for some $1<k\leq e$. Set $I'=(U_{k},\ldots,U_e)$. Applying again the above proposition we get $\depth_SI/(J,I')\leq d+1$.
Set $L_i=\cup_{j\geq i} U_j$.  Since  $I'=(L_{k}) $ and $\depth_SI/(J,L_{k})\leq d+1$  we see that   $\depth_SI/(J,L_{k+1})\leq d+1$ by Lemma \ref{l3}. Using by recurrence Lemma \ref{l3} we get  $\depth_SI/J=\depth_SI/(J,L_{e+1})\leq d+1$ since $L_{e+1}=\emptyset$.
 $\hfill\ \square$

It is not necessary  to assume in  Proposition \ref{pr} that $C\cap W=\emptyset$ because anyway this follows as shows the following lemma.

\begin{Lemma} \label{l4} If $\cap_{i\in [e]}U_i\not=\emptyset$ then $C\cap W=\emptyset$.
\end{Lemma}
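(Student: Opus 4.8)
The plan is to argue by contradiction. Suppose $w_{st}=\lcm(f_s,f_t)\in C\cap W$; since $\deg w_{st}=d+2$ the two generators satisfy $\deg\gcd(f_s,f_t)=d-2$, say $v:=\gcd(f_s,f_t)$. Fix $f_r\in\bigcap_{i\in[e]}U_i$ furnished by the hypothesis, so that $u_i\mid f_r$ for every $i$. The first thing I would record is the structural consequence of this: any generator $f$ lying in some $U_i$ satisfies $\deg\gcd(f,f_r)\ge d-1$, because $u_i\mid f$ and $u_i\mid f_r$ with $\deg u_i=d-1$. Equivalently, such an $f$ differs from $f_r$ in at most one variable.

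Next I would show that $f_s,f_t$ themselves lie in some $U_i$, hence are at distance at most one from $f_r$. Writing $w_{st}=f_sy_1y_2$ with $y_1,y_2\notin\supp f_s$, the monomial $f_sy_1$ is a squarefree degree $d+1$ element of $I$ dividing $w_{st}$; as $w_{st}\notin J$ it is not in $J$, so $f_sy_1\in B\subset W$ and factors as $\lcm(g_1,g_2)$ of two distinct generators, each a one–variable deletion of $f_sy_1$. Choosing the factor $g_1\ne f_s$ makes $\gcd(f_s,g_1)$ a degree $d-1$ gcd of two distinct generators, hence one of the $u_i$, so $f_s\in U_i$; the same applies to $f_t$. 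Since $\deg\gcd(f_s,f_t)=d-2<d-1$, neither $f_s$ nor $f_t$ equals $f_r$, so each is at distance exactly one; comparing the two deletions against $v$ then pins the configuration (up to relabeling) to $f_r=vx_px_q$, $f_s=vx_qx_c$, $f_t=vx_px_{c'}$, with $x_p,x_q,x_c,x_{c'}$ distinct and $x_c,x_{c'}\notin\supp f_r$, so that $w_{st}=vx_px_qx_cx_{c'}$ and in particular $f_r\mid w_{st}$.

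Finally I would extract the contradiction from the divisor $A:=f_sx_{c'}=vx_qx_cx_{c'}$ of $w_{st}$. As $A\mid w_{st}\notin J$ we get $A\notin J$, hence $A\in B\subset W$ and $A=\lcm(g_1,g_2)$ for two generators deleting distinct single variables of $A$. Their common factor $\gcd(g_1,g_2)$ is one of the $u_i$ and so divides $f_r$, which places both $g_1,g_2$ at distance at most one from $f_r$; since at most one of them deletes $x_{c'}$, one of them contains $x_{c'}$, and among the degree $d$ divisors of $A$ containing $x_{c'}$ the only one meeting $f_r=vx_px_q$ in $d-1$ variables is $g=vx_qx_{c'}$. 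As $g\ne f_t$, the gcd $\gcd(g,f_t)=\gcd(vx_qx_{c'},vx_px_{c'})=vx_{c'}$ has degree $d-1$, so $vx_{c'}$ is one of the $u_i$ and therefore divides $f_r=vx_px_q$; this forces $x_{c'}\in\{x_p,x_q\}$, contradicting $x_{c'}\notin\supp f_r$. Hence no such $w_{st}$ exists and $C\cap W=\emptyset$.

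The main obstacle is the rigidity, i.e.\ converting the numerical data $\deg\gcd(f_s,f_t)=d-2$ and $\deg\gcd(f_s,f_r)=\deg\gcd(f_t,f_r)=d-1$ into the explicit shape $f_r=vx_px_q$, $f_s=vx_qx_c$, $f_t=vx_px_{c'}$, and then, in the last step, ruling out the other pairwise factorizations of $A$ so that the factor carrying $x_{c'}$ must be exactly $g=vx_qx_{c'}$. Both points use the sunflower hypothesis only through the single fact that every relevant degree $d-1$ gcd divides $f_r$, so the delicacy is purely the bookkeeping of which variables may occur where, with no deeper homological input required.
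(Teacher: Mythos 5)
Your proof is correct and follows essentially the same route as the paper's: both pin down the configuration $f_r=vx_px_q$, $f_s=vx_qx_c$, $f_t=vx_px_{c'}$, then use the degree $d+1$ divisor $f_sx_{c'}\in B\subset W$ to produce the generator $vx_qx_{c'}$, and derive the contradiction from the fact that the degree $d-1$ gcd $vx_{c'}$ is some $u_m$ which by hypothesis must divide $f_r$. The only differences are cosmetic: you justify explicitly that $f_s,f_t$ lie in some $U_i$, and you phrase the final contradiction directly via $u_m\mid f_r$ where the paper routes it through the bound $|U_i\cap U_j|\le 1$.
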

\begin{proof} Clearly we may suppose that $e>1$. Let $f_r\in \cap_{i\in [e]}U_i$. Suppose that $w_{tt'}\in C$ for some $t\in U_i$, $t'\in U_j$. Then  $i\not =j$ and let us say $f_t=u_ix_1$, $f_{t'}=u_jx_2$ and $f_r=u_ix_3=u_jx_4$. It follows that $u_i=x_4v$, $u_j=x_3v$ for some monomial $v$ and so $f_t=x_1x_4v$, $f_{t'}=x_2x_3v$, $f_r=vx_3x_4$. Note that $f_tx_2\in B$ and so must be of type $w_{tt''}$ for some $t''\in [r]$. It follows that $f_t,f_{t''}\in U_k$ for some $k\in [e]$. By hypothesis $f_r\in U_k$ and so $k=i$ because otherwise $|U_k\cap U_i|>1$ which is impossible. Since $f_tx_2\in B$ we see that $x_2\not\in \supp u_i$ and it follows that $f_{t''}=x_2x_4v$. Therefore, $w_{t't''}=x_2x_3x_4v\in B$ and so $f_{t'},f_{t''}\in U_p$ for some $p$.  This is not possible because otherwise one of $|U_p\cap U_i|$, $|U_p\cap U_j|$, $|U_i\cap U_j|$ is  $\geq 2$. Contradiction!
\hfill\  \end{proof}


\begin{thebibliography}{99}

\bibitem{AE} A.\ Aslam, V.\ Ene, {\em Simplicial complexes with rigid depth}, Arch. Math. {\bf 99} (2012), 315-325.

\bibitem{BH}  W.\ Bruns,  J.\ Herzog, {\em Cohen-Macaulay rings}, Revised edition, Cambridge University Press,
(1998).

\bibitem{DGPS} W.\ Decker, G.-M.\ Greuel, G.\ Pfister, H.\ Sch\"onemann, {\em Singular 3-1-6, A computer algebra system for polynomial computations},
http://www.singular.uni-kl.de (2012).


\bibitem{HVZ} J.\ Herzog, M.\ Vladoiu, X.\ Zheng, {\em How to compute the Stanley depth of a monomial ideal,}  J.  Algebra, {\bf 322} (2009), 3151-3169.
\bibitem{Ap} A.\ Popescu, {\em Special Stanley Decompositions}, Bull. Math. Soc. Sc. Math. Roumanie, {\bf 53(101)}, no 4 (2010), 363-372, arXiv:AC/1008.3680.


  \bibitem{AP} A.\ Popescu, D.\ Popescu, {\em Four generated, squarefree, monomial ideals },   in  "Bridging Algebra, Geometry, and Topology",  Editors Denis Ibadula, Willem Veys, Springer Proceed. in Math., and Statistics, {\bf 96},  2014, 231-248,
arXiv:AC/1309.4986v5.

\bibitem{P} D.\ Popescu, {\em Depth of  factors of square free  monomial  ideals}, Proceedings of AMS {\bf 142} (2014), 1965-1972,arXiv:AC/1110.1963.
\bibitem{P0} D.\ Popescu, {\em Upper bounds of depth of monomial ideals},  J. Commutative Algebra, {\bf 5}, 2013, 323-327, arXiv:AC/1206.3977.

\bibitem{P1} D.\ Popescu, {\em Stanley depth on five generated, squarefree, monomial ideals},  2013, arXiv:AC/1312.0923v5.

\bibitem{PZ} D.\ Popescu, A.\ Zarojanu, {\em Depth of some square free monomial ideals}, Bull. Math. Soc. Sci. Math. Roumanie, {\bf 56(104)}, 2013,117-124.

 \bibitem{PZ1} D.\ Popescu, A.\ Zarojanu, {\em Three generated, squarefree, monomial ideals},  to appear in  Bull. Math. Soc. Sci. Math. Roumanie, {\bf 58(106)} (2015), no 3, arXiv:AC/1307.8292v6.
\bibitem{Sh} Y.H. \ Shen, {\em Lexsegment ideals of Hilbert depth 1}, (2012), arxiv:AC/1208.1822v1.

\bibitem{S} R.\ P.\ Stanley, {\em Linear Diophantine equations and local cohomology}, Invent. Math. {\bf 68} (1982) 175-193.

\end{thebibliography}
 \end{document}